\newcommand{\A}{\mathcal{A}}
\newcommand{\D}{\mathbb{D}}
\newcommand{\Q}{\mathbb{Q}}
\newcommand{\N}{\mathbb{N}}
\newcommand{\Z}{\mathbb{Z}}
\newcommand{\R}{\mathbb{R}}
\newcommand{\C}{\mathbb{C}}
\newcommand{\z}{\mathbf{z}}
\newcommand{\lv}{\boldsymbol\ell}
\newcommand{\xv}{\mathbf{x}}
\newcommand{\btheta}{\boldsymbol\theta}
\newcommand{\bomega}{\boldsymbol\omega}
\newcommand{\balpha}{\boldsymbol\alpha}
\newcommand{\bbeta}{\boldsymbol\beta}
\DeclarePairedDelimiterX{\floor}[1]{\lfloor}{\rfloor}{#1}
\DeclarePairedDelimiterX{\ceil}[1]{\lceil}{\rceil}{#1}
\DeclarePairedDelimiterX{\card}[1]{\ellert}{\rvert}{#1}
\DeclarePairedDelimiterX{\abs}[1]{\ellert}{\rvert}{#1}
\DeclarePairedDelimiterX{\norm}[1]{\ellert}{\rVert}{#1}
\DeclarePairedDelimiterX{\tuple}[1]{\lparen}{\rparen}{#1}
\DeclarePairedDelimiterX{\parens}[1]{\lparen}{\rparen}{#1}
\DeclarePairedDelimiterX{\brackets}[1]{\lbrack}{\rbrack}{#1}
\DeclarePairedDelimiterX{\set}[1]\{\}{#1}
\let\Pr\relax
\DeclarePairedDelimiterXPP{\Pr}[1]{\mathbb{P}}[]{}{#1}
\DeclarePairedDelimiterXPP{\PrX}[2]{\mathbb{P}_{#1}}[]{}{#2}
\DeclarePairedDelimiterXPP{\Ex}[1]{\mathbb{E}}[]{}{#1}
\DeclarePairedDelimiterXPP{\ExX}[2]{\mathbb{E}_{#1}}[]{}{#2}
\DeclarePairedDelimiterX{\xt}[1]{\lbrack}{\rbrack}{#1}
\theoremstyle{plain}
\newtheorem{thm}{Theorem}[section]
\newtheorem{lem}[thm]{Lemma}
\newtheorem{prop}[thm]{Proposition}
\newtheorem{cor}[thm]{Corollary}
\newtheorem*{thm*}{Theorem} 
\theoremstyle{definition}
\newtheorem{Def}[thm]{Definition}
\newtheorem{exampleth}[thm]{Example}
\newenvironment{example}{\begin{exampleth}}{\hfill
    $\diamond$\\ \end{exampleth}}
\newtheorem*{exampleth*}{Example}
\newenvironment{example*}{\begin{exampleth*}}{\hfill
    $\diamond$ \end{exampleth*}}
\newtheorem{remark}[thm]{Remark}
\DeclareMathOperator{\im}{Im}
\DeclareMathOperator{\conv}{conv}
\newcommand{\addresseshere}{%
	\enddoc@text\let\enddoc@text\relax
}
\begin{document}
\title[Lee-Yang polynomials from real-rooted exponential sums]{Every real-rooted exponential polynomial is the restriction of a Lee-Yang polynomial}

\author{Lior Alon}
\address{Massachusetts Institute of Technology, Cambridge, MA, U.S.A }
\email{lioralon@mit.edu}

\author{Alex Cohen}
\address{Massachusetts Institute of Technology, Cambridge, MA, U.S.A }
\email{alexcoh@mit.edu}

 \author{Cynthia Vinzant}
\address{University of Washington, Seattle, WA, U.S.A.}
\email{vinzant@uw.edu}

\maketitle

\begin{abstract}
A Lee-Yang polynomial $ p(z_{1},\ldots,z_{n}) $ is a polynomial that has no zeros in the polydisc $ \D^{n} $ and its inverse $ (\C\setminus\overline{\D})^{n} $. We show that any real-rooted  exponential polynomial of the form $f(x) = \sum_{j=0}^s c_j e^{\lambda_j x}$ can be written as the restriction of a Lee-Yang polynomial to a positive line in the torus. 
Together with previous work by Olevskii and Ulanovskii, this implies that the Kurasov-Sarnak construction of $ \N $-valued 
Fourier quasicrystals from stable polynomials comprises every possible $ \N $-valued Fourier quasicrystal. 
\end{abstract}

\section{Introduction}
	
A multivariate polynomial $p  \in \C[z_1, \hdots, z_n]$ is called \emph{Schur stable} if it has no zeros in the product $\D^n$ of the open unit disk, $\D = \{z\in \C : |z|<1\}$. 
Following \cite{Ruelle2010}, we call $p$ a \emph{Lee-Yang polynomial} if it has no zeros in $\Omega^n$ both for $\Omega = \D$ and $\Omega = \{z\in \C : |z|>1\}$. Equivalently,  $ p $ is a Lee-Yang polynomial if both $p$ and $p^{\dagger} = \prod_{j=1}^n z_j^{\deg_j(p)}p(1/z_1, \hdots, 1/z_n)$ are Schur stable, where for each $j$,  $ \deg_j(p) $ denotes the degree of $p$ in the variable $ z_{j} $.
	
For any Lee-Yang polynomial $p(z_1, \hdots, z_n)= \sum_{\balpha}c_{\balpha}z_1^{\alpha_1}\cdots z_n^{\alpha_n}$ and  vector $\lv = (\ell_1, \hdots, \ell_n)$ in $\R_+^n$, the univariate exponential polynomial 
\[
f(x) = p(\exp(ix\lv)) = p(e^{ix\ell_1}, \hdots, e^{ix\ell_n}) = \sum_{\balpha}c_{\balpha}e^{ix\langle \lv, \balpha\rangle}
\]
is real-rooted. That is, any point $z\in \C$ with $f(z) = 0$ has imaginary part equal to zero. 
The main result of this paper is to establish the converse, namely that every real-rooted exponential polynomial comes as such a restriction, up to multiplication by non-vanishing exponent. Given a tuple of real numbers, say $\omega_1, \hdots, \omega_s$, denote the dimension (as a $ \Q $-vector space) of their $ \Q $-linear span by $ \dim_{\Q}\{\omega_1, \hdots, \omega_s\} $.  

\begin{thm}\label{thm:main}
Let $f(x) = \sum_{j=0}^s c_j e^{\lambda_j x}$ where $c_0, \hdots, c_s \in \C^{*}$ and $\lambda_0, \hdots, \lambda_s\in \C$, ordered such that $ \im(\lambda_0)=\min_{0\le j\le s}\im(\lambda_j) $. Let $ n=\dim_{\Q}\{\im(\lambda_{1}-\lambda_{0}), \hdots, \im(\lambda_{s}-\lambda_{0})\} $. If $f(x)$ is real rooted, then there is a Lee-Yang polynomial $p\in \C[z_1, \hdots, z_n]$ and $\lv\in \R_+^n$ such that 
\[f(x) = e^{\lambda_{0}x}p(\exp(ix\lv)),\]
and the entries of $\lv$ are $ \Q $-linearly independent. 
\end{thm}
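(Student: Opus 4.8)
The plan is to reduce the problem, in two preparatory steps, to a normalized form, and then to prove a single analytic lemma which is the real content. \emph{First} I would show that real-rootedness forces $\re(\lambda_0)=\re(\lambda_1)=\dots=\re(\lambda_s)$. By classical results on the zeros of exponential sums, the zero set of $\sum_j c_je^{\lambda_j z}$ lies, outside a bounded region, in thin neighborhoods of rays perpendicular to the edges of the Newton polygon $\conv\{\lambda_0,\dots,\lambda_s\}\subset\C$, with density proportional to the edge lengths. If every zero is real, every such ray is horizontal, so every edge of the polygon is vertical; a convex polygon with only vertical edges is a vertical segment, i.e.\ the $\lambda_j$ all share a real part $\mu$. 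Writing $\lambda_j=\mu+i\nu_j$ with $\nu_0=\min_j\nu_j$ and $\delta_j:=\nu_j-\nu_0\ge 0$, we factor $f(x)=e^{\lambda_0 x}q(x)$ with $q(x)=\sum_j c_je^{i\delta_j x}$; since $e^{\lambda_0 x}$ never vanishes, $q$ is real-rooted, $\delta_0=0$ (and, the $\nu_j$ being distinct, only $j=0$ gives $\delta_j=0$), and $\dim_\Q\{\delta_1,\dots,\delta_s\}=n$.

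\emph{Next} I would produce $\lv\in\R_+^n$ with $\Q$-linearly independent entries and exponent vectors $\balpha^{(j)}\in\Z_{\ge 0}^n$, $\balpha^{(0)}=0$, with $\delta_j=\langle\lv,\balpha^{(j)}\rangle$. Fixing a $\Q$-basis of the $n$-dimensional $\Q$-span $V$ of the $\delta_j$ identifies $V$ with $\Q^n$ together with an injective evaluation map $\mathrm{ev}\colon\Q^n\to\R$ (coordinates back to the real number), with $\mathrm{ev}(\delta_j)=\delta_j>0$. The rational cone generated by the coordinate vectors of $\delta_1,\dots,\delta_s$ is pointed (it lies in $\{\mathrm{ev}\ge 0\}$ and meets the boundary only at $0$), hence is contained in a rational simplicial cone whose ray generators evaluate to positive reals; scaling the generators to clear denominators and applying $\mathrm{ev}$ yields $\lv$ and the $\balpha^{(j)}$, and injectivity of $\mathrm{ev}$ makes the entries of $\lv$ $\Q$-linearly independent. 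Choosing the simplicial cone so that the highest-frequency term points along its diagonal one can also arrange that the monomial $z^{\bd}$ with $\bd=(\deg_1 p,\dots,\deg_n p)$ actually occurs. Put $p(z):=\sum_j c_j z^{\balpha^{(j)}}$; then $f(x)=e^{\lambda_0 x}p(\exp(ix\lv))$, the constant term $p(0)=c_0\ne 0$, the leading coefficient $p^{\dagger}(0)=c_{\bd}\ne 0$, and the $\balpha^{(j)}$ span $\R^n$ (else the $\delta_j$ would span fewer than $n$ dimensions).

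\emph{The crux} is then to show this $p$ is Lee-Yang. Writing $x=a+bi$ and $z_k=e^{ia\ell_k}e^{-b\ell_k}$ one has $\tfrac1{\ell_1}\log|z_1|=\dots=\tfrac1{\ell_n}\log|z_n|=-b$, so real-rootedness of $q=p(\exp(ix\lv))$ says precisely that $p$ has no zeros on the open ``logarithmic torus tube''
\[
\Gamma^{\circ}_+=\bigl\{\,z\in\D^n:\ \tfrac1{\ell_1}\log|z_1|=\dots=\tfrac1{\ell_n}\log|z_n|\,\bigr\}
\]
(from $b>0$), nor on the analogous tube outside $\overline{\D^n}$ (from $b<0$), the latter equivalent to $p^{\dagger}$ having no zeros on a tube inside $\D^n$. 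I would upgrade this to: $p$ has no zeros anywhere in $\D^n$ (the statement for $p^{\dagger}$ then follows by the same argument, and $p$ is Lee-Yang). For this, use that $Z=\{p=0\}$ is a complex hypersurface with $0\notin Z$, together with the pluriharmonic functions $\psi_k(z)=\tfrac1{\ell_1}\log|z_1|-\tfrac1{\ell_k}\log|z_k|$, whose common zero set is $\Gamma^{\circ}_+$: on any component $C$ of $Z$ each $\psi_k$ is harmonic, and if $C$ met $\D^n$ while avoiding $\Gamma^{\circ}_+$ then some $\psi_k$ is nonvanishing, hence of constant sign, on $C\cap\D^n$; tracking the behavior of $C$ along $\partial\D^n$—$p(0)\ne 0$ prevents $C$ from escaping toward the coordinate axes at one sign of $\psi_k$, the spanning of the $\balpha^{(j)}$ excludes degenerate directions, and examining which faces of $\partial\D^n$ the closure of $C$ can touch fixes the boundary sign of $\psi_k$—one is forced into a sign change, a contradiction.

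\emph{The main obstacle is exactly this last step}: $\Gamma^{\circ}_+$ is only a real hypersurface in $\D^n$, so ``zero-free on $\Gamma^{\circ}_+$'' is a priori far weaker than ``zero-free on $\D^n$,'' and it is the use of the \emph{intrinsic} $\Q$-dimension $n$—so that no proper coordinate subtorus carries all the frequencies and $p$ genuinely uses every variable with nonvanishing constant and leading coefficients—that should make the implication go through. As an alternative route for this step, which I would pursue in parallel, one can try to construct $p$ directly from the zero structure of $q$: real-rootedness of $q$ is equivalent to $e^{-i(\delta_0+\delta_s)x}\,\overline{q(\bar x)}/q(x)$ being a meromorphic inner function on the upper half-plane (a Hermite--Biehler/interlacing condition), and one then realizes this inner function, together with the $\Q$-linear relations among the $\delta_j$, as the restriction of a determinantal Lee-Yang polynomial $\det\!\bigl(I-D(z)\,U\bigr)$ for a suitable unitary $U$, in the spirit of the Kurasov--Sarnak construction.
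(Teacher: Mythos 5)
Your first two steps are essentially the paper's \Cref{lem: complex frequencies} and \Cref{lem:Exp2Poly} (Polya's strip theorem forcing a vertical Newton segment, then a rational-cone argument producing $\lv$ and nonnegative integer exponents), and they are fine. The genuine gap is in your ``crux.'' First, your starting assertion there is wrong as stated: real-rootedness of $q(x)=p(\exp(ix\lv))$ says only that $p$ has no zeros on the two-real-dimensional curve $\{\exp(ix\lv):\im x\neq 0\}$, not on the full tube $\Gamma^{\circ}_+=\{z:\tfrac1{\ell_1}\log|z_1|=\dots=\tfrac1{\ell_n}\log|z_n|\}$, which has real dimension $n+1$. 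Kronecker density of $\{x\lv \bmod 2\pi\}$ does not close this gap: on each torus fiber $\{|z_k|=e^{-b\ell_k}\}$ the zero set of $p$ is a closed set of real codimension $2$, which a dense line can perfectly well avoid. Upgrading ``no zeros on the orbit'' to ``no zeros on the whole fiber'' is exactly where the paper uses almost periodicity, namely Levin's uniform lower bound for exponential sums away from their zeros (\Cref{lem: lower bound}), to get \Cref{cor:Bohr}: the line $\{t\lv:t\in\R^*\}$ misses the amoeba $\A(p)$. Nothing in your proposal plays this role, and without it the argument does not start.

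Second, even granting zero-freeness on the tubes, your proposed pluriharmonic sign-change argument for ``zero-free on $\Gamma^{\circ}_+$ implies zero-free on $\D^n$'' is only a sketch; the claimed contradiction (tracking a component of $\{p=0\}$ to $\partial\D^n$ and ``fixing the boundary sign'' of $\psi_k$) is not substantiated, and for a general $p$ produced by your second step the implication is simply false --- this is precisely why the paper needs the amoeba machinery: GKZ convexity of the components of $\A(p)^c$, the recession-cone/normal-cone statement (\Cref{prop:recCone}), and the monomial change of variables (\Cref{lem:MonomialChange}) to replace $p$ by a genuinely Lee-Yang $q$. I will note that your side remark about arranging the top monomial $z^{\bd}$ to appear is a good instinct: if you arrange $\balpha^{(s)}$ to dominate every $\balpha^{(j)}$ componentwise (which you can do by forcing the simplicial cone to contain the coordinate vectors of $\delta_s-\delta_j$ as well --- ``pointing along the diagonal'' alone does not give this), then $\R_{\geq0}^n$ and $\R_{\leq 0}^n$ lie in the normal cones at the vertices $\bd$ and $\mathbf{0}$ of the Newton polytope, and combining \Cref{cor:Bohr} with \Cref{prop:recCone} and convexity of amoeba complement components would make your $p$ itself Lee-Yang, with no change of variables. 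But that route still requires the two inputs your write-up omits: the almost-periodicity lower bound and the amoeba convexity/normal-cone facts; as written, the proof is incomplete at its central step.
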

	
We prove this theorem in \Cref{sec:proof}. 	
\begin{remark}
	The construction in Theorem \ref{thm:main} is not unique, however the dimension $ n $ is optimal. If $ q\in\C[z_{1},\ldots,z_{m}] $ and $ \textbf{v}\in\R^{m} $ are such that $ f(x) = e^{\lambda_{0}x}q(\exp(ix\textbf{v})) $, then $ \im(\lambda_{1}-\lambda_{0}), \hdots, \im(\lambda_{s}-\lambda_{0}) $ are in the $ \Q $-linear span of the entries of $ \textbf{v} $, so $ m\ge n. $
\end{remark}
\begin{example}\label{ex:OU} In \cite[Lemma 2]{OlevUlan20}, Olevskii and Ulanovskii show that the function
		\begin{equation*}
			f(x)=\sin(\pi x)+\varepsilon\sin(x)=\frac{1}{2i}\left(e^{i\pi x}-e^{-i\pi x}+\varepsilon e^{ix}-\varepsilon e^{-ix}\right)
		\end{equation*} 
		is real rooted for any real $ \varepsilon $ with $ |\varepsilon|\le\frac{1}{2}$. Here $ (\lambda_{0},\lambda_{1},\lambda_{2},\lambda_{3})=i(-\pi,-1,1,\pi) $. The entries of $(\im(\lambda_1 - \lambda_0), \im(\lambda_2 - \lambda_0), \im(\lambda_3- \lambda_0))  =(\pi-1,\pi+1,2\pi) $ span a two-dimensional vectorspace over $\Q$. We can realize the function $f(x)$, up to an exponential multiple, as the restriction of the Lee-Yang polynomial $ p(z_{1},z_{2})=\frac{1}{2i}\left(z_{1}z_{2}-1+\varepsilon z_{2}-\varepsilon z_{1}\right) $ to $(z_1, z_2) = \exp(ix\lv)$ for $ \lv=(\pi-1,\pi+1) $. 
Namely, 
\[
f(x)=\frac{e^{\lambda_{0}x}}{2i}\left(e^{i2\pi x}-1+\varepsilon e^{i(\pi+1)x}-\varepsilon e^{i(\pi-1)x}\right)=e^{\lambda_{0}x}p(\exp(ix\lv)).
\]
We discuss this example further in \Cref{ex:OUcont} to illustrate the general construction in \Cref{thm:main}. It is worth mentioning that the case $\epsilon=\frac{1}{3}$ also follows from \cite[Equation (43)]{KurasovSarnak20jmp}.  
	\end{example}
	
Our main motivation for \Cref{thm:main} comes from recent developments in the study of \emph{crystalline measures} and \emph{Fourier Quasicrystals}. To state these, let $ \mathcal{S}(\R) $ denote the space of \emph{Schwartz functions}: smooth functions $ f:\R\to\C $ with $ \sup_{x\in\R}|x^{n}\frac{d^{m}}{d^{m}x}f(x)|<\infty $ for all $ n,m\in\Z_{\geq 0} $. The elements of the dual space $ \mathcal{S}'(\R) $ are called \emph{tempered distributions}, and we focus on these that are also measures. A \emph{tempered measure} $ \mu $ is a complex valued Borel measure on $ \R $ that satisfy $ |\int fd\mu|<\infty $ for all $ f\in\mathcal{S}(\R) $. Such a measure $ \mu $ is said to be \emph{discrete} if there is a discrete (locally finite) set $ \Lambda\subset\R $ and complex coefficients $ (a_{x})_{x\in\Lambda} $ such that $ \int fd\mu=\sum_{x\in\Lambda}a_{x}f(x) $, and its absolute value $ |\mu| $ is the non-negative measure defined by replacing $ a_{x} $ with $ |a_{x}| $ for all $ x\in\Lambda $ (there can by $ \mu\in \mathcal{S}'(\R) $ with $ |\mu|\notin \mathcal{S}'(\R) $). The Fourier transform is a linear automorphism of $ \mathcal{S}(\R) $, and the Fourier transform of $ \mu\in \mathcal{S}'(\R) $ is defined by duality, $ \int fd\hat{\mu}:=\int\hat{f}d\mu $, for all $ f\in\mathcal{S}(\R) $.   

A \emph{crystalline measure}, according to Meyer \cite{Meyer2016}, is a discrete and tempered measure $ \mu\in \mathcal{S}'(\R) $ whose Fourier transform $ \hat{\mu} $ is also discrete. A \emph{Fourier Quasicrystal}(FQ), according to Lev and Olevskii \cite{LevOlev2017fourier}, is a crystalline measure $ \mu $ for which $ |\mu|$ and $ |\hat{\mu}| $ are also tempered. We say that $ \mu $ is $ \N $-valued if $ \mu(I)\in\N $ for every interval $ I\subset\R $, i.e., $ a_{x}\in\N $ for all $ x\in\Lambda $. 

The first example of an $ \N $-valued FQ which is not periodic (or a linear combination of such) was constructed by Kurasov and Sarnak \cite{KurasovSarnak20jmp}. They showed that if $ p(z_{1},\ldots,z_{n}) $ is a Lee-Yang polynomial and  $ \lv\in\R_{+}^{n} $ then the discrete measure $ \mu=\mu_{p,\lv} $, with $ \Lambda=\set{x\in\R\ : \ p(\exp(ix\lv))=0} $ and $ a_{x} $ equal to the multiplicity of $ x $ as a zero of $ p(\exp(ix\lv)) $, is an $ \N $-valued FQ. The inverse question was asked by Olevskii and Ulanovskii \cite{OlevUlan20} who showed that any $ \N $-valued FQ, $ \mu=\sum_{x\in\Lambda}a_{x}\delta_{x} $, can be associated with a real-rooted exponential polynomial $f$, such that $ \Lambda=\set{x\in\R\ : \ f(x)=0} $ and the coefficients $ a_{x} $ are the multiplicities of the zeros of $f $. The results of \cite{KurasovSarnak20jmp,OlevUlan20} together with our Theorem \ref{thm:main} yields:
\begin{cor}\label{cor:main} A measure $\mu$ on $\R$ is an $\N$-valued Fourier quasicrystal if and only if $ \mu=\mu_{p,\lv} $, for some Lee-Yang polynomial $ p(z_{1},\ldots,z_{n}) $ and positive frequencies $ \lv \in \R_+^n$.
\end{cor}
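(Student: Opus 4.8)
The plan is to obtain \Cref{cor:main} by composing three results that are already available: the Kurasov--Sarnak construction \cite{KurasovSarnak20jmp}, the Olevskii--Ulanovskii correspondence \cite{OlevUlan20}, and \Cref{thm:main}. No new estimates are needed; the whole content is in checking that these three statements glue. For the ``if'' direction I would simply invoke \cite{KurasovSarnak20jmp}: if $p\in\C[z_1,\ldots,z_n]$ is a Lee--Yang polynomial and $\lv\in\R_+^n$, then $\mu_{p,\lv}$ is an $\N$-valued Fourier quasicrystal. This is exactly the assertion recalled in the introduction, so nothing further is required on that side.

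For the ``only if'' direction, I would start with an arbitrary $\N$-valued Fourier quasicrystal $\mu=\sum_{x\in\Lambda}a_x\delta_x$. By \cite{OlevUlan20} there is a real-rooted exponential polynomial $f$ whose zero set is exactly $\Lambda$ and whose order of vanishing at each $x\in\Lambda$ is exactly $a_x$. After discarding any vanishing terms and merging equal exponents, write $f(x)=\sum_{j=0}^s c_j e^{\lambda_j x}$ with $c_0,\ldots,c_s\in\C^{*}$ and $\lambda_0,\ldots,\lambda_s\in\C$ distinct, ordered so that $\im(\lambda_0)$ is minimal. Now apply \Cref{thm:main} to $f$: it produces a Lee--Yang polynomial $p\in\C[z_1,\ldots,z_n]$ and $\lv\in\R_+^n$ (with $\Q$-linearly independent entries) such that $f(x)=e^{\lambda_0 x}\,p(\exp(ix\lv))$.

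The final step is to identify $\mu$ with $\mu_{p,\lv}$. Since $x\mapsto e^{\lambda_0 x}$ never vanishes, the function $p(\exp(ix\lv))$ has the same zero set as $f$, namely $\Lambda$, and the same order of vanishing at each $x\in\Lambda$, namely $a_x$. By the definition of $\mu_{p,\lv}$ this gives $\mu_{p,\lv}=\sum_{x\in\Lambda}a_x\delta_x=\mu$, completing the argument.

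I expect the only delicate point to be bookkeeping rather than mathematics. One must confirm that the exponential polynomial furnished by \cite{OlevUlan20} is genuinely a finite sum of exponentials so that \Cref{thm:main} applies verbatim after normalizing, and that ``multiplicity (order) of a zero'' is used in the same sense across \cite{KurasovSarnak20jmp}, \cite{OlevUlan20}, and \Cref{thm:main}. Neither of these constitutes a real obstacle, so \Cref{cor:main} is essentially a formal consequence of \Cref{thm:main} together with the cited work.
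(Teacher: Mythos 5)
Your proposal is correct and follows exactly the route the paper takes: the corollary is obtained by combining the Kurasov--Sarnak construction for the ``if'' direction with the Olevskii--Ulanovskii correspondence and \Cref{thm:main} for the ``only if'' direction, using that the nonvanishing factor $e^{\lambda_0 x}$ preserves zero sets and multiplicities.
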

In future work, we establish the connection between properties of $ \N $-valued Fourier quasicrystal and its associated 
Lee-Yang polynomial in more detail. \\

	{\bf Notation.} For vectors ${\bf z} = (z_1, \hdots, z_n)$ and $\balpha = (\alpha_1, \hdots, \alpha_n)$, we use $\z^{\balpha}$ to denote $\prod_{j=1}^nz_{j}^{\alpha_j}$ and $\exp(\z)$ for $(e^{z_1}, \hdots, e^{z_n})$.  For the positive and negative real lines, we use $\R_+ = \{x\in \R: x>0\} $ and $\R_- = \{x\in \R: x<0\} $, respectively. 
\\	
	
	{\bf Acknowledgements.} The work began while the first and third authors were visitors at the Institute for Advanced Studies. 
	They would like to thank Greg Knese, Mihai Putinar, Pavel Kurasov and Peter Sarnak for comments. The first author is supported by the Simons Foundation Grant 601948, DJ.
	The second author is supported by an NSF GRFP fellowship and a Hertz Foundation fellowship. The third author was partially supported by NSF-DMS grant \#2153746 and a Sloan Research Fellowship. 
This material is based upon work directly supported by the National Science Foundation Grant No. DMS-1926686, and indirectly supported by the National Science Foundation Grant No. CCF-1900460.

\section{Amoebas and a reduction}	
In this section, we prove several useful lemmata, including a reduction to purely imaginary exponents and a connection to amoebae, in the sense of Gelfand, Kapranov, and Zelevinsky.

\begin{lem}\label{lem: complex frequencies}
	Let $f(x) =\sum_{j=0}^{s}c_j e^{\lambda_j x}$ where $c_0, \hdots, c_s \in \C^*$ with distinct $\lambda_0, \hdots, \lambda_s \in \C$, ordered such that $ \im(\lambda_{0})\le\im(\lambda_{j}) $ for all $ j \geq 1$. If $ f $ is real-rooted, then $ \lambda_{j}=\lambda_{0}+i\omega_j $, with $ \omega_j\in\R_{+} $, for all $ j\ge1 $. Namely, for all $x\in \C$,
	\[
	f(x) =e^{\lambda_{0}x}\left(c_{0}+\sum_{j=1}^{s}c_j e^{i\omega_j x}\right).
	\]
\end{lem}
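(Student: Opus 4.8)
The plan is to divide out the non-vanishing factor $e^{\lambda_0 x}$ and then show, via the classical description of the zero set of an exponential sum, that real-rootedness forces all of the remaining exponents onto a common vertical line.

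First I would reduce. Since $e^{\lambda_0 x}$ has no zeros, $f$ is real-rooted if and only if
$g(z):=e^{-\lambda_0 z}f(z)=c_0+\sum_{j=1}^{s}c_j e^{\mu_j z}$ is, where $\mu_j:=\lambda_j-\lambda_0$ and $\mu_0:=0$. By the ordering hypothesis $\im\mu_j=\im\lambda_j-\im\lambda_0\ge 0$, and the $\mu_j$ are distinct (so $\mu_j\ne 0$ for $j\ge 1$). The asserted conclusion $\lambda_j=\lambda_0+i\omega_j$ with $\omega_j\in\R_+$ is precisely the statement that every $\mu_j$ is purely imaginary: granting that, $\omega_j:=\im\mu_j\ge 0$, and $\omega_j\ne 0$ then forces $\omega_j>0$. (If $s=0$ there is nothing to prove, so assume $s\ge1$, and note $g$ is not of the form $e^{az+b}$, so it has infinitely many zeros, all of them real.)

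The heart of the matter is the claim that the polygon $P:=\conv\{\mu_0,\ldots,\mu_s\}$, which lies in $\{w:\im w\ge 0\}$ and contains $0$, is in fact contained in the imaginary axis. Suppose not; having at least two points and not lying on a vertical line, $P$ then has a non-vertical edge $e$ (take $e=P$ if $P$ is a segment; if $P$ is two-dimensional it cannot have all edges vertical). Here I would invoke the classical theory of zeros of exponential sums (the P\'olya indicator-diagram theorem): the zeros of $g$ with $|z|$ large lie within a bounded distance of the union of the rays from the origin perpendicular to the edges of $P$ (equivalently of $\overline{P}$; the choice does not matter), and the ray perpendicular to $e$ carries $\tfrac{|e|}{2\pi}\,r+O(1)$ zeros of $g$ in the disc $|z|\le r$. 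Since $|e|>0$, this exhibits infinitely many zeros of $g$ clustering along a ray whose direction makes a nonzero angle with $\R$ (because $e$ is non-vertical); once $|z|$ is large such zeros have $\im z\ne 0$, contradicting the real-rootedness of $g$. In the language used elsewhere in this section, this says that the tropical spine of $g$ is the fan of rays normal to the edges of $P$, and real-rootedness pins that spine to $\R$, forcing every edge of $P$ to be vertical, hence $P\subset i\R$. Either way we conclude that every $\mu_j$ is purely imaginary, and the lemma follows as above.

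The step that genuinely needs care is this input from the theory of exponential sums: not merely that a $(\ge 2)$-term exponential sum of finite exponential type has infinitely many zeros (which is immediate from the Hadamard factorization, since otherwise $g$ would be a polynomial times $e^{az+b}$, incompatible with its indicator diagram having more than one point), but the sharper fact that those zeros apportion themselves among the normals to the individual edges of $P$ in the stated proportions — this is exactly what rules out the a priori possibility that all of the (many) zeros accumulate only along the normal to a vertical edge, i.e.\ along $\R$. If one wants to keep the argument self-contained, the needed confinement can be obtained by hand: tilting a ray slightly off an edge-normal makes a single term of $g$ dominate, so $g$ is zero-free along such tilted rays for $|z|$ large, which traps the zeros in thin sectors about the edge-normals; a quantitative form of this trapping together with a Jensen count of zeros then forces a non-vertical edge to carry non-real zeros.
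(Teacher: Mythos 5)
Your argument is correct and is essentially the paper's own proof: both rest on the classical P\'olya--Langer description of the zeros of an exponential sum as clustering (in infinite number, per edge) along the normals to the edges of the convex hull of the exponents, so that real-rootedness forces that hull to be a vertical segment, after which the ordering and distinctness of the $\lambda_j$ give $\omega_j>0$. Dividing out $e^{\lambda_0 x}$ first and phrasing the classical input via ray-counting $\tfrac{|e|}{2\pi}r+O(1)$ rather than strips each containing infinitely many zeros are only cosmetic differences from the paper's treatment.
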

\begin{proof}
Let $  \conv(\Lambda) $ be the convex hull of the $ s+1 $ points $\Lambda=   \{\lambda_{0},\lambda_{1},\ldots,\lambda_{s}\} $ in the complex plane $ \C\simeq \R^{2}$. To every edge $ e$ of the polygon $ \conv(\Lambda) $ we associate a normal vector $\hat{n}_{e}\in  \C\simeq \R^{2}$. If $e$ has endpoints $\lambda_j, \lambda_k$, then  $\hat{n}_{e}=i(\lambda_{j}-\lambda_{k}) $.
For each such edge $e$ and $\varepsilon>0$, define the infinite strip
$I_{e}(\varepsilon)=\set{t\hat{n}_{e}+z: t\in\R,\ |z|<\varepsilon }$.
	A century old result by Polya \cite{Polya20zerosexppol} (see also Langer's review paper \cite[Theorem 8]{Langer31}) says that the zero set of $ f $ is unbounded and all but finitely many of the zeros lie in the strips $I_{e}(\varepsilon) $, for suitable choices of $ \varepsilon>0 $, where $ e$ ranges over the edges of $ \conv(\Lambda) $. Furthermore, each of these strips contains infinitely many zeros. In particular, if $ f $ is real-rooted then there can be only one strip and its direction is real, i.e.~$\hat{n}_{e}\in\R $. Therefore $ \conv(\Lambda) $ is a line segment whose normal is real. It follows that for every $j\geq 1$, the numbers $\lambda_j- \lambda_0$ are purely imaginary. 
By our choice of ordering, the values $\omega_j = \im(\lambda_j- \lambda_0) = (\lambda_j- \lambda_0)/i$ are also nonnegative. Note that since $\lambda_0, \hdots, \lambda_s$ are distinct, each $\omega_j$ cannot be zero and so is strictly positive. 
\end{proof}

\begin{lem}\label{lem:Exp2Poly}
Let $f(x) =c_{0}+\sum_{j=1}^{s}c_j e^{i\omega_j x}$ where $c_0, \hdots, c_s \in \C^*$ and $ \omega_1,\ldots\omega_{s}\in\R_{+} $. Let $ n=\dim_{\Q}(\omega_1, \hdots, \omega_s) $. Then there exists a polynomial $p\in \C[z_1, \hdots, z_n]$ and a positive vector $\lv\in \R_+^n$ with $ \Q $-linearly independent entries $ (\dim_{\Q}(\lv)=n) $ for which 
\[
f(x) =p(\exp(ix\lv))
\]
for all $x\in \C$. In particular, $ p({\bf 0})=c_{0}$ which is non-zero by assumption. 
\end{lem}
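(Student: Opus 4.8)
The plan is to reduce the statement to a fact about lattices and cones. After relabeling, I would assume $\omega_1,\dots,\omega_n$ are $\Q$-linearly independent, hence a $\Q$-basis of $V:=\operatorname{span}_\Q\{\omega_1,\dots,\omega_s\}$, and choose a positive integer $Q$ large enough that every $\omega_j$ lies in $L:=\bigoplus_{k=1}^n\Z\,(\omega_k/Q)$. Let $\mathbf{M}_j\in\Z^n$ be the coordinate vector of $\omega_j$ in the basis $\mathbf{u}:=(\omega_1/Q,\dots,\omega_n/Q)$ of $L$; then $\mathbf{M}_1,\dots,\mathbf{M}_n$ are $Qe_1,\dots,Qe_n$, and the functional $\psi(\mathbf{x}):=\langle\mathbf{x},\mathbf{u}\rangle$ satisfies $\psi(\mathbf{M}_j)=\omega_j>0$ for every $j$. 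The lemma reduces to the claim: \emph{there exist $\R$-linearly independent $\mathbf{a}_1,\dots,\mathbf{a}_n\in\Q^n$ with $\psi(\mathbf{a}_k)>0$ for all $k$ such that each $\mathbf{M}_j$ is a nonnegative-integer combination $\mathbf{M}_j=\sum_k\alpha_{jk}\mathbf{a}_k$.} Granting this, put $\lv:=(\psi(\mathbf{a}_1),\dots,\psi(\mathbf{a}_n))\in\R_+^n$ and $p(z_1,\dots,z_n):=c_0+\sum_{j=1}^s c_j\prod_{k=1}^n z_k^{\alpha_{jk}}$: then $\langle\lv,\alpha_j\rangle=\psi(\mathbf{M}_j)=\omega_j$ gives $p(\exp(ix\lv))=f(x)$; each $\omega_j>0$ forces $\alpha_j\neq\mathbf 0$, so $p(\mathbf 0)=c_0$; and since the entries of $\mathbf u$ are $\Q$-linearly independent the map $\psi$ is injective on $\Q^n$, so the entries $\psi(\mathbf a_k)$ of $\lv$ inherit $\Q$-linear independence from the $\mathbf a_k$, giving $\dim_{\Q}(\lv)=n$.

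To prove the reduced claim, consider the rational polyhedral cone $C:=\operatorname{cone}\{\mathbf{M}_1,\dots,\mathbf{M}_s\}\subseteq\R^n$. It is full-dimensional because $\mathbf{M}_1,\dots,\mathbf{M}_n$ span $\R^n$, and it is pointed because any relation $\sum_j t_j\mathbf{M}_j=\mathbf 0$ with $t_j\ge 0$ yields $\sum_j t_j\omega_j=\psi\bigl(\sum_j t_j\mathbf{M}_j\bigr)=0$, whence all $t_j=0$. Consequently the dual cone $C^*:=\{\mathbf{y}:\langle\mathbf{y},\mathbf{x}\rangle\ge 0\text{ for all }\mathbf{x}\in C\}$ is full-dimensional and $\mathbf{u}\in\operatorname{int}(C^*)$. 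I would then take a full-dimensional rational simplicial subcone $\tau\subseteq C^*$ with $\mathbf u$ in its interior — for instance, $n$ linearly independent inward facet normals of $C$ in whose positive hull $\mathbf u$ lies, which exist by Carathéodory's theorem since $C^*$ is the cone on all facet normals of $C$ and $\mathbf u\in\operatorname{int}(C^*)$. Dualizing, $\sigma:=\tau^*$ is a full-dimensional rational simplicial cone containing $C=C^{**}$, and $\psi$ is strictly positive on $\sigma\setminus\{\mathbf 0\}$ because $\mathbf u\in\operatorname{int}(\tau)=\operatorname{int}(\sigma^*)$. Letting $\mathbf b_1,\dots,\mathbf b_n$ be rational generators of the rays of $\sigma$, each $\mathbf M_j\in\sigma$ is the unique nonnegative \emph{rational} combination of them; clearing a common denominator $D$ of all these coefficients, the vectors $\mathbf a_k:=\mathbf b_k/D$ are $\R$-linearly independent, satisfy $\psi(\mathbf a_k)=\psi(\mathbf b_k)/D>0$, and express every $\mathbf M_j$ with nonnegative integer coefficients. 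This is exactly the reduced claim.

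The only step that is not bookkeeping is the enlargement of the pointed cone $C$ to a simplicial cone $\sigma$ on which $\psi$ stays positive: for $n\le 2$ the cone $C$ is already simplicial and there is nothing to do, but for $n\ge 3$ the dualization (or some equivalent device) seems genuinely necessary. Everything else — the choice of $Q$, the translation $f\leftrightarrow p$, and the clearing of denominators by $D$, which merely replaces $L$ by the finer lattice $\bigoplus_k\Z\mathbf a_k$ and is harmless since the lemma asks only for $\Q$-linearly independent frequencies — is routine. Invariantly, the reduced claim is the standard fact that a finitely generated, cancellative, torsion-free, pointed commutative monoid embeds into some $\N^n$, applied to the monoid generated by $\omega_1,\dots,\omega_s$; the cone argument above is a proof of that fact which additionally tracks the embedding $V\hookrightarrow\R$ in order to guarantee $\lv\in\R_+^n$.
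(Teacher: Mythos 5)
Your reduction to the lattice/cone claim, and the passage back from that claim to the lemma (positivity of $\lv$, $\Q$-independence of its entries via injectivity of $\psi$ on $\Q^n$, and $p(\mathbf{0})=c_0$), are all correct, and the route is a genuine dual counterpart of the paper's argument: you work in the $n$-dimensional frequency lattice and enlarge the pointed cone $C=\operatorname{cone}\{\mathbf{M}_1,\dots,\mathbf{M}_s\}$ to a rational simplicial cone, whereas the paper works in $\R^s$, applying Carath\'eodory to the cone $L_\R\cap\R^s_{\ge 0}$ (with $L_\R$ the real span of the orthogonal complement of the relation space of $\bomega$), so that the extreme rays are automatically nonnegative integer vectors and become the rows of the exponent matrix with no dualization needed. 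However, the one step you yourself flag as non-bookkeeping has a genuine gap as written: Carath\'eodory applied to the facet normals of $C$ only yields $\mathbf{u}$ as a nonnegative combination of \emph{some} linearly independent normals --- possibly fewer than $n$ of them, and possibly with vanishing coefficients --- so it does not place $\mathbf{u}$ in the \emph{interior} of a full-dimensional simplicial subcone $\tau\subseteq C^*$, which is exactly what you use to get $\psi>0$ on $\sigma\setminus\{\mathbf 0\}$ and hence $\lv\in\R_+^n$. As a statement of pure convex geometry the claim is false: if $C^*$ is the cone in $\R^3$ over a square, with extreme rays $(\pm1,0,1)$ and $(0,\pm1,1)$, then the interior point $(0,0,1)$ lies in no open simplicial cone spanned by three of these rays (every such triple represents it with a zero coefficient).

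The gap is easily closed, and the repair is the same trick the paper uses to force its coefficients $\ell_j$ to be nonzero: the entries of $\mathbf{u}=(\omega_1/Q,\dots,\omega_n/Q)$ are $\Q$-linearly independent, so $\mathbf{u}$ lies on no hyperplane with a rational normal, hence not in the real span of fewer than $n$ rational vectors. Therefore in a Carath\'eodory representation of $\mathbf{u}$ by linearly independent rational facet normals there must be exactly $n$ of them and all coefficients must be strictly positive, which is precisely $\mathbf{u}\in\operatorname{int}(\tau)$. (Alternatively, you never need interior membership at all: once $\mathbf{u}\in\tau=\sigma^*$ you get $\psi(\mathbf{b}_k)=\langle\mathbf{u},\mathbf{b}_k\rangle\ge 0$, and $\langle\mathbf{u},\mathbf{b}_k\rangle=0$ is impossible for a nonzero rational $\mathbf{b}_k$, again by $\Q$-independence; so strict positivity is automatic.) With either observation inserted, your proof is complete and correct; without it, the inference ``by Carath\'eodory's theorem'' does not support the interior claim on which the positivity of $\lv$ rests.
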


\begin{proof}
Let $R \subset \Q^s$ denote the set of $\Q$-linear relations on the entries of $\bomega = (\omega_1, \hdots, \omega_s)$ and $L\subset \Q^s$ its orthogonal complement. That is, we define $R = \{{\bf r}\in \Q^s : \langle \bomega, {\bf r}\rangle =0\}$ and $L= R^{\perp}$ to be $\{{\bf v}\in \Q^s: \langle {\bf v}, {\bf r}\rangle=0 \ \forall {\bf r}\in R\}$.    By assumption, $L$ is an $n$-dimensional $\Q$-linear subspace of $\Q^s$ and so $L_{\R} = L\otimes_{\Q}\R = \{{\bf v}\in \R^s: \langle {\bf v}, {\bf r}\rangle=0 \ \forall {\bf r}\in R\}$ is an $n$-dimensional $\R$-linear subspace of $\R^s$.  

Consider the intersection of $L_{\R}$ with the nonnegative orthant $\R_{\geq 0}^s$. This is a rational polyhedral cone of dimension at most $n$ that contains the vector $\bomega$. 
By Carath\'eodory's Theorem, $\bomega$ can be written as a nonnegative combination of some choice of $n$ extreme rays of this cone. In particular, we can choose integer representatives ${\bf v}_1, \hdots, {\bf v}_n\in \Z_{\geq 0}^s$ of these $n$ extreme rays and write  $\bomega= \sum_{j=1}^n \ell_j {\bf v}_j$ for some $\ell_j\geq 0$.  Moreover, since ${\rm span}_{\Q}\{\omega_1, \hdots, \omega_s\}$ has dimension $n$ over $\Q$ and the vectors ${\bf v}_j$ have rational entries, the entries of $\lv = (\ell_1, \hdots, \ell_n)$ must be linearly independent over $\Q$. In particular, none of the values $\ell_j$ can be zero, giving $\lv = (\ell_1, \hdots, \ell_n)\in \R_+^n$. 

Let $A\in \Z_{\geq0}^{n\times s}$ be the $n\times s$ matrix with rows ${\bf v}_1, \hdots, {\bf v}_n$. Then by construction $\bomega = A^T \lv$. Consider the polynomial 
$p(z_1, \hdots, z_n) = c_0+ \sum_{j=1}^s c_j {\bf z}^{\balpha(j)}$,
where  where $\balpha(1), \hdots, \balpha(s)$ are the columns of $A$. 
By construction, for every $j=1, \hdots, s$, we have $\omega_j = \langle \lv, \balpha(j)\rangle$, giving $ (\exp(ix\lv))^{\balpha(j)}=e^{i x\langle \lv, \balpha(j)\rangle}=e^{i \omega_j x} $. Therefore, for any $ x\in\C $, 
\[
p(\exp(ix\lv))= c_0 + \sum_{j=1}^s c_{j}e^{i \omega_j x}.\qedhere\]
\end{proof}
\begin{Def}The \textbf{amoeba} $\A(p)$ of a polynomial $ p(\z)\in \C[z_1, \hdots, z_n]$ is the image of the variety of $p$ under the map $(\C^*)^n \to \R^n$ taking $(z_1, \hdots, z_n)$ to $(\log|z_1|, \hdots, \log|z_n|)$. Equivalently, 
	\[\mathcal{A}(p)=\set{\xv\in\R^{n}\ :\ \exists \btheta\in(\R/2\pi\Z)^{n}\ \text{s.t.}\ p(\exp(\xv+i\btheta))=0 }.\]
\end{Def}
It is a classical theorem by Gelfand, Kapranov, and Zelevinsky that every connected component of the complement $ \mathcal{A}(p)^{c}=\R^{n}\setminus\mathcal{A}(p) $ is open and convex \cite{GKZ}. 
See also \cite{MR2715250, MR1752241}. The real logarithm $ z\mapsto\log|z| $ maps the outer disc $ \C\setminus\overline{\D} $ onto $ \R_{+} $, so $ p $ does not vanish in $ (\C\setminus\overline{\D})^{n} $ if and only if $ \R_{+}^{n} \subset \mathcal{A}(p)^{c} $. Similarly, the real logarithm maps the punctured disk $ \D\setminus\{0\}=\D\cap\C^{*} $ onto $ \R_{-} $, so $ p $ does not vanish in $ (\D\cap\C^{*})^{n} $ if and only if $ \R_{-}^{n} \subset \mathcal{A}(p)^{c} $. If we further ask that $ p(0)\ne0 $, then $p$ has no monomial factors, in which case its zero set in $(\C^*)^n$ is dense in its zero set in $\C^n$ (both in the Euclidean and Zariski topologies on $\C^n$). See e.g.~\cite[Lemma~7.1, p.~121]{Shafarevich2}. The next lemma follows. 
\begin{lem}\label{lem:AmoebaLY}
	A polynomial $ p(\z) \in\C[z_{1},\ldots,z_{n}] $ with $ p(0)\ne 0 $ is Schur stable if and only if $ \R_{-}^{n}\subset \mathcal{A}(p)^{c}$ and is a Lee-Yang polynomial if and only if $ \R_{-}^{n}\cup\R_{+}^{n} \subset \mathcal{A}(p)^{c}$.
\end{lem}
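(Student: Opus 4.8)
The plan is to obtain the lemma purely by chaining together the three facts assembled in the paragraph just above it, so that essentially no new work is needed. The first ingredient is the behaviour of the modulus map $z\mapsto\log|z|$: it carries the punctured disk $\D\setminus\{0\}$ onto $\R_-$ and the exterior disk $\C\setminus\overline{\D}$ onto $\R_+$. The second is the definition of the amoeba: $\xv\in\A(p)$ precisely when there is some $\z\in(\C^*)^n$ with $|z_j|=e^{x_j}$ for all $j$ and $p(\z)=0$. The third, and the only one with content, is that $p(0)\neq 0$ forces $p$ to have no monomial factor, so that the zero set of $p$ in $(\C^*)^n$ is dense, in the Euclidean topology, in the zero set of $p$ in $\C^n$, by the cited lemma of Shafarevich.

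For the Schur-stable half, I would first note that since $\D^n$ is open, the density statement lets me replace ``$p$ vanishes somewhere in $\D^n$'' by the a priori stronger ``$p$ vanishes somewhere in $\D^n\cap(\C^*)^n=(\D\setminus\{0\})^n$'': a hypothetical zero of $p$ in $\D^n$ sits in a Euclidean-open subset of $\D^n$, which must then also contain a zero of $p$ lying in $(\C^*)^n$. Then I would observe that for $\z\in(\C^*)^n$, membership in $(\D\setminus\{0\})^n$ is the same as $(\log|z_1|,\dots,\log|z_n|)\in\R_-^n$, so by the description of the amoeba, $p$ has a zero in $(\D\setminus\{0\})^n$ if and only if $\R_-^n$ meets $\A(p)$. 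Reading the chain backwards gives $p$ Schur stable $\iff \R_-^n\subseteq\A(p)^c$.

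For the Lee-Yang half, I would start from the definition that $p$ is Lee-Yang if and only if it has no zero in $\D^n$ and no zero in $(\C\setminus\overline{\D})^n$. The first condition is $\R_-^n\subseteq\A(p)^c$ by the Schur-stable case already done. For the second condition the density step is not even needed, since $(\C\setminus\overline{\D})^n$ is already contained in $(\C^*)^n$: $p$ has a zero there if and only if it has a zero $\z\in(\C^*)^n$ with all $|z_j|>1$, i.e.\ with $(\log|z_1|,\dots,\log|z_n|)\in\R_+^n$, which by the amoeba description means $\R_+^n$ meets $\A(p)$. Combining the two, $p$ is Lee-Yang if and only if $\R_-^n\subseteq\A(p)^c$ and $\R_+^n\subseteq\A(p)^c$, that is, if and only if $\R_-^n\cup\R_+^n\subseteq\A(p)^c$.

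The only place I expect to have to be careful is the passage from zeros in the full polydisk $\D^n$ to zeros in the punctured polydisk $(\D\setminus\{0\})^n$ — a priori a zero of $p$ could live on a coordinate hyperplane — and this is precisely where the hypothesis $p(0)\neq 0$ is used: it rules out $z_j\mid p$ for every $j$, hence rules out any irreducible component of the zero set of $p$ lying inside a coordinate hyperplane, which is the hypothesis needed for the density fact. Everything else is just unwinding the two definitions (amoeba, Lee-Yang) through the map $z\mapsto\log|z|$, so I would not anticipate any further obstacle.
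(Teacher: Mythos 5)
Your proposal is correct and follows essentially the same route as the paper: the paper derives the lemma from exactly the three facts you cite (the image of the punctured and exterior disks under $z\mapsto\log|z|$, the definition of the amoeba, and the density of the zero set in $(\C^*)^n$ when $p(0)\neq 0$), leaving the details implicit with ``the next lemma follows.'' Your careful handling of the passage from $\D^n$ to $(\D\setminus\{0\})^n$ via openness and density is precisely the step the paper's hypothesis $p(0)\neq 0$ is there to justify, so nothing is missing.
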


The amoeba of a polynomial $p = \sum_{\balpha} c_{\balpha} {\bf z}^{\balpha}$ has a close connection to its Newton polytope 
\[ {\rm Newt}(p) = \conv({\rm supp}(p)) \ \text{ where } \ {\rm supp}(p)=\{ \balpha\in \Z^n : c_{\balpha}\neq 0\}. 
\]
In particular, every connected component $ E $ of the complement $\mathcal{A}(p)^c$ can be associated to integer point 
 $\bbeta\in{\rm Newt}(p)\cap\Z^{n}$, and the recession cone of such a convex region $ E $ coincides with the normal cone of ${\rm Newt}(p)$ at $\bbeta$, 
 \[
 \mathcal{N}_{\bbeta} = \left\{{\bf w}\in \R^n : \langle \bbeta, {\bf w} \rangle = \max_{\balpha \in {\rm Newt}(p)}  \langle \balpha, {\bf w} \rangle \right\}.
 \]
See \cite[Proposition 2.6]{MR1752241}. For the sake of completeness, we provide a proof of the following simplification: 

\begin{prop}[see Proposition 2.6 of \cite{MR1752241}]\label{prop:recCone}
Let $p = \sum_{\balpha} c_{\balpha}\z^{\balpha}\in \C[z_1, \hdots, z_n]$ and let $\lv\in \R^n$ with $ \Q $-linearly independent entries.
\begin{enumerate}
	\item There is a unique vertex $\bbeta\in{\rm Newt}(p)\cap\Z^{n}$ that maximizes $\langle \bbeta, \lv \rangle=\max_{\balpha\in{\rm Newt}(p)}\langle \balpha, \lv \rangle$.
	\item Suppose that the ray $ \R_{+}\lv=\set{t\lv\ :\ t>0}  $ lies in $\mathcal{A}(p)^c$, and let $ E $ denote the connected component of $\mathcal{A}(p)^c$ containing $\R_+\lv$. Then $ E $ contains the interior of the normal cone at $ \bbeta $, $ {\rm int}(\mathcal{N}_{\bbeta})\subset E $.
\end{enumerate}
\end{prop}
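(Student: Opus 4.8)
The plan is to handle the two parts in sequence, with part (1) being a short convexity/irrationality argument and part (2) the substantive claim. For part (1), I would use the fact that $\bbeta\mapsto\langle\bbeta,\lv\rangle$ is a linear functional on the polytope $\newt(p)$, so its maximum is attained on a face $F$. If $F$ had positive dimension it would contain two distinct lattice points $\bbeta_1,\bbeta_2$ with $\langle\bbeta_1-\bbeta_2,\lv\rangle=0$; writing out this equation gives a nontrivial $\Z$-linear (hence $\Q$-linear) relation among the entries $\ell_1,\dots,\ell_n$, contradicting $\Q$-linear independence. Hence $F$ is a single vertex $\bbeta$, and the maximizer is unique. (One should note $\newt(p)$ is a lattice polytope, so every face is the convex hull of its lattice points.)

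For part (2), the goal is to show $\mathrm{int}(\mathcal{N}_\bbeta)\subset E$, where $E$ is the connected component of $\A(p)^c$ containing $\R_+\lv$. The key mechanism is the standard dictionary between connected components of $\A(p)^c$ and the Ronkin/order map: a component $E$ has an \emph{order} $\bbeta(E)\in\newt(p)\cap\Z^n$, and this order is computed at any $\x\in E$ by the formula for the gradient of the Ronkin function, which for large $t$ in the direction of a given $\x$ recovers the exponent maximizing $\langle\balpha,\cdot\rangle$. Concretely, I would argue: since $\R_+\lv\subset E$, the component $E$ is unbounded in the direction $\lv$; its recession cone is a union of normal cones of $\newt(p)$, and because $\lv$ lies in $\mathrm{int}(\mathcal{N}_\bbeta)$ by part (1) (here I use that $\lv$ has $\Q$-linearly independent entries, so it is not on any proper face's normal cone boundary), the recession cone of $E$ is exactly $\mathcal{N}_\bbeta$ and the order of $E$ is $\bbeta$. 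Then for any $\w\in\mathrm{int}(\mathcal{N}_\bbeta)$, I would show $\w\in\A(p)^c$ by a direct estimate: for $\x=t\w$ with $t$ large, the single term $c_\bbeta\z^\bbeta$ dominates $p(\exp(\x+i\btheta))$ uniformly in $\btheta$ (since $|\z^\balpha|=e^{\langle\x,\balpha\rangle}$ and $\langle\x,\bbeta\rangle-\langle\x,\balpha\rangle=t\langle\w,\bbeta-\balpha\rangle\to+\infty$ for $\balpha\neq\bbeta$), so $p$ does not vanish there; hence $t\w\in\A(p)^c$ for all large $t$. Finally I would connect $t\w$ to $t'\lv$ inside $\A(p)^c$: both rays $\R_+\w$ and $\R_+\lv$ eventually lie in $\A(p)^c$ with the same order $\bbeta$, and the complement component with a given order is unique and convex (GKZ), so a far-out segment joining $t\w$ to $t'\lv$ stays in that one component, proving $\w\in E$; since $E$ is open this gives a full neighborhood, and running over all $\w$ yields $\mathrm{int}(\mathcal{N}_\bbeta)\subset E$.

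The main obstacle I anticipate is making the "order determines the component, uniquely" step rigorous without importing the full Ronkin-function machinery — i.e., justifying that two points of $\A(p)^c$ with the same associated vertex $\bbeta$ lie in the same component. The cleanest route is probably to avoid orders entirely and argue purely with the dominant-term estimate plus convexity of components: show directly that the entire relatively-open region $\{\x : \langle\x,\bbeta\rangle > \langle\x,\balpha\rangle \text{ for all }\balpha\in\supp(p)\setminus\{\bbeta\}, \ \langle\x,\bbeta\rangle \text{ large enough}\}$ lies in a single convex component, and that this region meets both $\R_+\lv$ (far out) and contains points of the form $t\w$ for every $\w\in\mathrm{int}(\mathcal{N}_\bbeta)$; then convexity of the component forces the intersection of $E$ with $\mathrm{int}(\mathcal{N}_\bbeta)$ to be all of $\mathrm{int}(\mathcal{N}_\bbeta)$. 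Care is needed because the dominant-term estimate only controls behavior far from the origin, so I would phrase everything in terms of recession cones: $\mathrm{int}(\mathcal{N}_\bbeta)$ is contained in the recession cone of $E$, and a convex set whose closure's recession cone contains an open cone $C$ and which meets a translate of $C$ must contain a translate of $C$ — then push the translate to $0$ using that $E$ is a component of a complement of an amoeba, which is scale-free at infinity in the relevant directions. This is the step where I would spend the most effort getting the quantifiers right.
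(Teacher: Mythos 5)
Your part (1) is correct and matches the paper's argument (irrationality of $\lv$ forces the maximizing face to be a single vertex), and the skeleton of your ``cleanest route'' for part (2) — a polyhedral region where the $\bbeta$-term dominates, contained in $\A(p)^c$ and meeting the ray far out, combined with GKZ convexity of components — is exactly the paper's strategy. Two points, however, are genuinely off. A minor one: the region you write down, $\{\x : \langle\x,\bbeta\rangle>\langle\x,\balpha\rangle \ \forall \balpha\in\supp(p)\setminus\{\bbeta\},\ \langle\x,\bbeta\rangle \text{ large}\}$, is not contained in $\A(p)^c$: making $\langle\x,\bbeta\rangle$ large does not help if the gaps $\langle\x,\bbeta-\balpha\rangle$ stay small, and near the boundary of the cone the remaining terms can still dominate. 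What the triangle inequality actually needs is that each gap beat the constants, as in the paper's polyhedron $P_{\bbeta}$ defined by $\langle\bbeta-\balpha,\x\rangle>\log|c_{\balpha}|-\log|c_{\bbeta}|+\log M$ for all $\balpha\neq\bbeta$. This is easily repaired.

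The serious gap is the final step: you never get from ``$t{\bf w}\in E$ for all large $t$'' to ``${\bf w}\in E$''. In your first paragraph this is asserted outright, but the segment joining $t'\lv$ to $t{\bf w}$ does not pass through ${\bf w}$ (that would force $t'=0$), and openness of $E$ only gives neighborhoods of points already known to lie in $E$; moreover that route imports injectivity of the order map from the very reference being reproved. In your fallback, the appeal to $E$ being ``scale-free at infinity'' is not a property amoeba complements have, and — crucially — it never uses the one hypothesis that makes the statement true, namely that the \emph{entire} ray $\R_+\lv$, down to arbitrarily small multiples $\varepsilon\lv$, lies in $E$. Without that hypothesis the conclusion is false: for $p(z)=z+c$ with $|c|>1$, the component attached to the vertex $1$ is $(\log|c|,\infty)$, whose recession cone has interior $\R_+$ not contained in it. The correct finish is a one-line perturbation exploiting exactly the full-ray hypothesis, which is what the paper does: given ${\bf w}\in{\rm int}(\mathcal{N}_{\bbeta})$, choose $\varepsilon>0$ with $\tilde{{\bf w}}={\bf w}-\varepsilon\lv\in{\rm int}(\mathcal{N}_{\bbeta})$; then $t\tilde{{\bf w}}\in P_{\bbeta}\subseteq E$ for large $t$, while $\tfrac{\varepsilon t}{t-1}\lv\in\R_+\lv\subseteq E$, and
\[
{\bf w}=\Bigl(1-\tfrac{1}{t}\Bigr)\Bigl(\tfrac{\varepsilon t}{t-1}\lv\Bigr)+\tfrac{1}{t}\,\bigl(t\tilde{{\bf w}}\bigr)\in E
\]
by convexity of $E$. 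Equivalently, in your recession-cone language: ${\rm int}(\mathcal{N}_{\bbeta})$ lies in the recession cone of $\overline{E}$, hence $\varepsilon\lv+{\rm int}(\mathcal{N}_{\bbeta})\subseteq E$ for every $\varepsilon>0$, and the ``translate is pushed to $0$'' by the ray hypothesis, not by any scale invariance. With these two corrections your plan becomes the paper's proof.
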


\begin{proof}
Since ${\rm Newt}(p)$ is convex and compact, then the maximum of $ \balpha\mapsto \langle \balpha, \lv \rangle $ is attained at a vertex. The map $ \balpha\mapsto \langle \balpha, \lv \rangle $ is injective on the vertices of ${\rm Newt}(p)$, as the vertices are integer and the entries of $ \lv $ are $ \Q $-linearly independent. This proves (1). 

Let $ \bbeta $ be the unique maximizer. Since its a vertex of  ${\rm Newt}(p)$, then  the normal cone of ${\rm Newt}(p)$ at $\bbeta$, 
\[
\mathcal{N}_{\bbeta} = \left\{{\bf w}\in \R^n : \langle \bbeta, {\bf w} \rangle = \max_{\balpha \in {\rm Newt}(p)}  \langle \balpha, {\bf w} \rangle \right\},
\]
is a full-dimensional polyhedral cone. Consider the open convex polyhedron 
\[P_{\bbeta} = \left\{{\bf w}\in \R^n : \langle \bbeta, {\bf w} \rangle + \log|c_{\bbeta}| > \max_{\balpha\neq \bbeta}\left(\langle \balpha, {\bf w} \rangle + \log|c_{\balpha}| + \log M \right) \right\},\quad M=|{\rm supp}(p)|.
\]
We claim that $P_{\bbeta}\subseteq E$.  To see this, note that for any ${\bf w}\in P_{\bbeta}$, the strict inequality $|c_{\bbeta}e^{\langle\bbeta, {\bf w}\rangle}| > \max_{\balpha\neq \bbeta} M|c_{\balpha} e^{\langle\balpha, {\bf w} \rangle}|\ge \sum_{\balpha\neq \bbeta} |c_{\balpha} e^{\langle\balpha, {\bf w} \rangle}|$ holds, and therefore $p(\exp({\bf w} + i \btheta))\ne0$ for any $\btheta\in (\R/2\pi \Z)^n$, so ${\bf w}\not\in \mathcal{A}(p)$. Therefore $P_{\bbeta}\subseteq A(p)^c$. 
Recall that $ \langle \bbeta, \lv \rangle > \max_{\balpha\neq \bbeta}\langle \balpha, \lv \rangle $, so for sufficiently large $t\in \R$, $t\lv\in P_{\bbeta}$, showing that $E$ is the connected component of $A(p)^c$ containing $P_{\bbeta}$. 

Now suppose that ${\bf w}\in{\rm int}(\mathcal{N}_{\bbeta})$, and let ${\bf \tilde{w}} = {\bf w} - \varepsilon \lv$, for sufficiently small $\varepsilon>0$, so that $ {\bf \tilde{w}}\in {\rm int}(\mathcal{N}_{\bbeta})$ as well. 
For large enough $t>1$, $t{\bf \tilde{w}}$ belongs to $P_{\bbeta}\subseteq E$. 
Since $\R_+\lv\subseteq E$ and $E$ is convex, we see that it also contains
${\bf w} = (1-\frac{1}{t})(\frac{\varepsilon t}{t-1} \lv) + \frac{1}{t}(t{\bf \tilde{w}})$.
\end{proof}

\begin{lem}\label{lem:MonomialChange}
Let $p\in \C[z_1, \hdots, z_n]$, with $p({\bf 0})\neq 0$ and $ \lv\in\R_{+}^{n}$ with $ \Q $-linearly independent entries. If the set $\{t\lv : t\in \R^*\}$ is disjoint from the 
amoeba of $p$, then there exists a Lee-Yang polynomial $q\in \C[z_1, \hdots, z_n]$ and vector 
$\tilde{\lv}\in\R_{+}^{n}$ with $ \Q $-linearly independent entries for which
	\[p(\exp(ix \lv))=q(\exp(ix\tilde{\lv})).\]
\end{lem}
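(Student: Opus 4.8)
The plan is to use \Cref{prop:recCone} to transfer the amoeba information to the Newton polytope, and then apply a monomial change of variables to move the relevant vertex to the origin. First I would apply part (2) of \Cref{prop:recCone} to the ray $\R_+\lv$: since $\R_+\lv$ is disjoint from $\A(p)$, there is a unique vertex $\bbeta^+\in\newt(p)\cap\Z^n$ maximizing $\langle\bbeta^+,\lv\rangle$, and the connected component $E^+$ of $\A(p)^c$ containing $\R_+\lv$ contains $\mathrm{int}(\mathcal{N}_{\bbeta^+})$. Applying the same proposition to $-\lv$ (equivalently, replacing $\lv$ by $-\lv$, which is still a vector with $\Q$-linearly independent entries), and noting that $\R_-\lv = \R_+(-\lv)$ is also disjoint from $\A(p)$, we obtain a unique vertex $\bbeta^-$ minimizing $\langle\bbeta^-,\lv\rangle$ (i.e.\ maximizing $\langle\bbeta^-,-\lv\rangle$) and a component $E^-\supseteq \mathrm{int}(\mathcal{N}_{\bbeta^-})$ with respect to $-\lv$.

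Next I would perform the monomial substitution $z_j \mapsto z_j$ composed with the Laurent monomial map corresponding to the vertex $\bbeta^-$. Concretely, set $q(\z) = \z^{\bd}\, p(\z)$ for a suitable integer vector $\bd$ (or rather a unimodular-type adjustment) so that, after clearing, the Newton polytope of $q$ is $\newt(p) - \bbeta^-$, which is contained in $\Z_{\geq 0}^n$ and has the origin $\mathbf{0}$ as a vertex. In terms of amoebas, translating the support of $p$ by $-\bbeta^-$ translates the amoeba by... nothing — actually multiplying $p$ by a monomial $\z^{\bd}$ does not change $\A(p)$ at all on $(\C^*)^n$, it only changes which integer point the component is "associated to". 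So the correct statement is: $q(\z) := \z^{-\bbeta^-} p(\z)$ has the same amoeba as $p$ (as sets in $\R^n$), has $q(\mathbf{0})\neq 0$ because $\mathbf{0}$ is now in $\supp(q)$ with nonzero coefficient $c_{\bbeta^-}$, and $q$ is a genuine polynomial because $\bbeta^- - \bbeta^- = \mathbf{0}$ and all other exponents $\balpha - \bbeta^-$... hmm, these need not be in $\Z_{\geq 0}^n$ in general. This is the crux of the obstacle: we need $\bbeta^-$ to be a vertex such that $\newt(p) - \bbeta^- \subseteq \Z_{\geq 0}^n$, which requires $\bbeta^-$ to be the unique vertex in the "direction" making everything else nonnegative.

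To resolve this I would instead work with the correct unimodular coordinate change dictated by the geometry. Since $\bbeta^-$ minimizes $\langle\cdot,\lv\rangle$ and $\bbeta^+$ maximizes it, and $\lv\in\R_+^n$, one wants a new frame in which $\bbeta^-$ becomes the origin and the direction $\lv$ still has positive, $\Q$-linearly independent coordinates. The right move: let $q(\z) = \z^{-\bbeta^-}p(\z)$ as a \emph{Laurent} polynomial; its Newton polytope is $\newt(p)-\bbeta^-$, a lattice polytope with $\mathbf{0}$ as the vertex minimizing $\langle\cdot,\lv\rangle$ and the vertex $\bbeta^+-\bbeta^-$ maximizing it. The cone generated by $\newt(q)-\mathbf 0$ at the origin is pointed (it's a vertex) and $\lv$ lies in the interior of its dual cone. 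Choose a unimodular matrix $U\in \mathrm{GL}_n(\Z)$ mapping this pointed cone into $\R_{\geq 0}^n$; then $\tilde q(\z) = q(\z^{U})$ (substituting monomials) is an honest polynomial with $\tilde q(\mathbf 0)\neq 0$, its amoeba is the image $U^{-T}\A(q) = U^{-T}\A(p)$, and the ray condition transfers: $\{t\lv: t\in\R^*\}$ disjoint from $\A(p)$ becomes $\{t\tilde\lv : t\in\R^*\}$ disjoint from $\A(\tilde q)$ where $\tilde\lv = U^{-T}\lv$ (or $U^T\lv$, depending on the convention for monomial substitution in the exponent), which still has $\Q$-linearly independent entries since $U$ is invertible over $\Z$; and by the choice of $U$ one arranges $\tilde\lv\in\R_+^n$ (possibly composing with a sign/permutation adjustment, or by choosing $U$ so the dual cone of the image cone contains $\tilde\lv$ in its positive orthant). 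Finally, $\R_+^n\cup\R_-^n \subseteq \A(\tilde q)^c$: indeed, both $E^+$ and $E^-$ contain the interiors of the respective normal cones at the extreme vertices, which after the coordinate change become the normal cones at $\mathbf 0$ and at the top vertex — and one checks that for a polytope with $\mathbf 0$ as a vertex, the normal cone at $\mathbf 0$ contains a full orthant direction, so after the unimodular change $\R_-^n\subseteq \mathrm{int}(\mathcal N_{\mathbf 0}) \subseteq E^-$, while $\R_+^n$ lands in the normal cone of the opposite vertex $\subseteq E^+$; hence by \Cref{lem:AmoebaLY}, $\tilde q$ is Lee-Yang. Setting $q := \tilde q$ and $\tilde\lv$ as above, the substitution identity $p(\exp(ix\lv)) = \tilde q(\exp(ix\tilde\lv))$ holds by construction of the monomial map. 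The main obstacle, as flagged, is the bookkeeping of the unimodular change of coordinates: ensuring simultaneously that $\tilde q$ is a polynomial (not Laurent), that $\tilde q(\mathbf 0)\neq 0$, that $\tilde\lv$ stays in $\R_+^n$ with independent entries, and that \emph{both} rays $\R_+^n$ and $\R_-^n$ end up in the amoeba complement — this last point using crucially that the original hypothesis controls the full punctured line $\{t\lv:t\in\R^*\}$, not just a ray.
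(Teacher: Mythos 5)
There is a genuine gap at the key step, namely in what your choice of $U$ actually guarantees. Your only constraint on $U$ is that it maps the tangent cone $\sigma$ of the Newton polytope at the $\lv$-minimal vertex into $\R_{\geq 0}^n$ (note, incidentally, that since $p(\mathbf{0})\neq 0$ and $\lv\in\R_+^n$, that vertex is $\bbeta^-=\mathbf{0}$, so your Laurent shift is vacuous — fortunately, since otherwise multiplying by $\z^{-\bbeta^-}$ would change $p(\exp(ix\lv))$ by the factor $e^{-ix\langle\bbeta^-,\lv\rangle}$ and your final identity would fail; you never address this). The condition $U\sigma\subseteq\R_{\geq0}^n$ is equivalent to $\tau:=U^T\R_{\geq0}^n\subseteq\sigma^\vee=-\mathcal{N}_{\mathbf 0}$, and this does control the negative orthant: $\R_-^n$ maps into $-\mathrm{int}(\sigma^\vee)=\mathrm{int}(\mathcal{N}_{\mathbf 0})\subseteq E^-$. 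But it gives nothing for the positive orthant: your assertion that ``$\R_+^n$ lands in the normal cone of the opposite vertex'' does not follow, because $\tau$ is only known to lie in $\sigma^\vee$, which is in general much larger than $\mathcal{N}_{\bbeta^+}$, and the amoeba can meet $\mathrm{int}(\tau)$. Likewise $\tilde\lv=U^{-T}\lv\in\R_+^n$ is equivalent to $\lv\in\mathrm{int}(\tau)$, which your constraint does not provide and which cannot be repaired by ``sign/permutation adjustments'' without destroying $U\sigma\subseteq\R_{\geq0}^n$. A concrete way to see the insufficiency: $U=I$ satisfies everything you impose (since $\newt(p)\subseteq\Z_{\geq0}^n$ already), giving $\tilde q=p$ and $\tilde\lv=\lv$ — but $p$ itself need not be Lee-Yang; producing a Lee-Yang representative is precisely what the lemma must accomplish.

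The fix is to choose the new coordinate cone inside a region known to avoid the amoeba in \emph{both} directions, and this is what the paper does: it takes $C=C_+\cap(-C_-)\cap\R_{\geq0}^n$, where $C_\pm$ are the full-dimensional rational cones supplied by \Cref{prop:recCone} for $\lv$ and $-\lv$ (essentially the normal cones at the two extreme vertices), so that $C$ is pointed, rational, full-dimensional, contains $\lv$ in its interior, and satisfies $\mathrm{int}(C)\subseteq E_+$, $\mathrm{int}(-C)\subseteq E_-$. It then takes $n$ integer generators of $C$ via Carath\'eodory and substitutes $\z\mapsto\z^A$ for the resulting nonnegative integer matrix $A$: no unimodularity is needed, since such a substitution already sends polynomials to polynomials and preserves the constant term, while the $\Q$-linear independence of the entries of $\lv$ forces the coefficients $\tilde\lv$ in $\lv=A^T\tilde\lv$ to be strictly positive and $\Q$-linearly independent. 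If you insist on a unimodular $U$, you would additionally have to prove the existence of a unimodular cone contained in $C$ containing $\lv$ in its interior (e.g.\ via a smooth subdivision, using that $\lv$ lies on no rational hyperplane) — extra machinery that the paper's Carath\'eodory step avoids. As written, your argument controls only the vertex $\bbeta^-=\mathbf 0$ and therefore only half of the Lee-Yang condition of \Cref{lem:AmoebaLY}.
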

\begin{proof}
Let $E_+$ and $E_{-}$ denote the connected components of $\A(p)^c$ containing $\R_+\lv$ and $\R_-\lv$, respectively. By \Cref{prop:recCone} with $\lv$ (resp.~$ -\lv $), the connected component $E_+$ (resp. $E_-$) contains a full-dimensional rational polyhedral cone $ C_{+} $ (resp. $ C_{-} $) whose interior contains $\lv$ (resp. $-\lv$). Let $C$ denote the intersection of the rational polyhedral cones $C_+$ and $-C_-$ and the nonnegative orthant $\R_{\geq 0}^n$. Then $C$ is a pointed full-dimensional rational polyhedral cone containing $\lv$. Moreover the amoeba of $p$ is disjoint from ${\rm int}(C) \cup {\rm int}(-C)$. 

The same argument as in the proof of Lemma \ref{lem:Exp2Poly} allows to choose extreme rays of $C$ which are integer vectors, ${\bf v}_1, \hdots, {\bf v}_n \in C\cap \Z^n$, such that $\lv = \sum_{j=1}^n \tilde{\ell}_j {\bf v}_j$ with $\tilde{\ell}_j\geq 0$. Since the entries of $\lv$ are $ \Q $-linearly independent, we conclude that $ \tilde{\ell}_j> 0 $ for all $ j $, giving $\tilde{\lv}\in \R_+^n$. Let $A$ denote the $n\times n$ matrix with rows ${\bf v}_1, \hdots, {\bf v}_n$, so that $A^{T}\tilde{\lv} = \lv$. Let $ \balpha(1), \hdots, \balpha(n) $ be the columns of $ A $, and consider the polynomial $q(z_1, \hdots, z_n) = p({\bf z}^A) = p({\bf z}^{\balpha(1)}, \hdots, {\bf z}^{\balpha(n)})$. For any $\xv, \btheta\in \R^n$ and any $\balpha(k)=(a_{1k}, \hdots, a_{nk})$ , 
\[
\exp(\xv+i\btheta)^{\balpha(k)}  =  \prod_{j=1}^n e^{(x_j+i\theta_j)a_{jk}}  = e^{(A^{T}(\xv+i\btheta))_{k}}.\]
From this we see that 
$q(\exp(\xv+i\btheta))
 =p(\exp(A^{T}(\xv+i\btheta) ))$ and that 
the amoeba of $q$ is the set of $ \xv\in\R^{n} $ such that $ A^{T}\xv $ belongs to the amoeba of $p$. 

By construction, any conic combination $\sum_{j=1}^n x_j {\bf v}_j$ with $x_j> 0$ belongs to the interior of the cone $C$. Thus, $ A^{T}\xv\in {\rm int}(C)\subset \mathcal{A}(p)^c$ for any $ \xv\in\R_{+}^n$, which means that  $\R_{+}^n\subset \mathcal{A}(q)^c$. The same argument applies to $ \R_{-}^{n} $ and $ {\rm int}(-C)$, so $\A(q)$ is disjoint from $\R_+^n\cup \R_-^n$. Since $ q({\bf 0})=p({\bf 0})\ne0 $, then $q$ is a Lee-Yang polynomial by \Cref{lem:AmoebaLY}.
Finally note that, 
\[
q(\exp(i x \tilde{\lv})) 
= p(\exp(i x A^{T}\tilde{\lv})
=
p(\exp(ix\lv)),
\]
 for any $x\in \C$.\end{proof}

\begin{figure}
\includegraphics[height=2in]{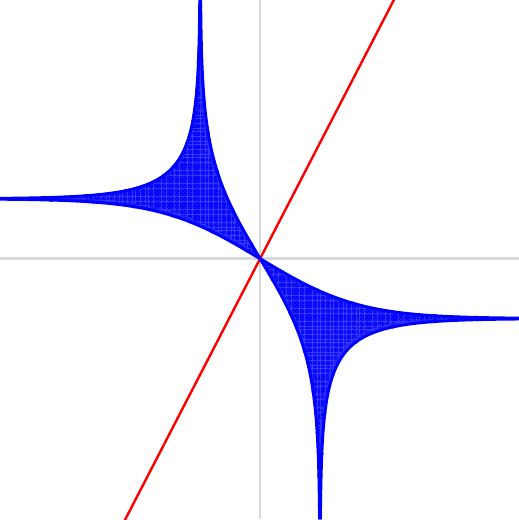}
\caption{The amoeba $\mathcal{A}(p)$ and line spanned by  $\lv$ from \Cref{ex:OUcont}.}\label{fig:amoeba}
\end{figure}

\begin{example}[\Cref{ex:OU} cont']\label{ex:OUcont}
Consider the real-rooted exponential function 
\[
e^{-\lambda_0x}f(x)=e^{i\pi x}(\sin(\pi x)+\varepsilon\sin(x))=
\frac{1}{2i}\left(-1-\varepsilon e^{i(\pi - 1)x} +\varepsilon e^{i(\pi + 1)x}+ e^{i2\pi x}\right)
\]
discussed in \Cref{ex:OU}. 
The entries of $\bomega =(\pi-1,\pi+1,2\pi) $ span a two-dimensional vector space over $\Q$. Following the proof of \Cref{lem:Exp2Poly}, 
the set $R$ of $\Q$ linear relations on $\bomega = (\omega_1, \omega_2, \omega_3) = (\pi-1,\pi+1,2\pi) $ is spanned by ${\bf r} = (1,1,-1)$. 
If $L = R^{\perp}$ then $L\otimes_\Q\R$ is $L_{\R} = \{(v_1,v_2,v_3)\in \R^3 : v_1+v_2-v_3 = 0\}$. 
The intersection of $L_{\R}$ with the nonnegative orthant is a two-dimensional cone with extreme rays spanned by ${\bf v}_1 = (1,0,1)$ and ${\bf v}_2 = (0,1,1)$.
Then $\bomega = \ell_1{\bf v}_1 + \ell_2{\bf v}_2$ for the positive vector 
$\lv = (\ell_1, \ell_2) = (\pi - 1, \pi+1)\in \R_+^2$. 

Take the matrix $A =  {\small\begin{pmatrix}  1 & 0 & 1 \\ 0 & 1 & 1\end{pmatrix}}$ with rows ${\bf v}_1, {\bf v}_2$ and columns $A_1, A_2, A_3 \in \Z_{\geq 0}^2$. We construct $p(z_1, z_2)$ by replacing $e^{i\omega_jx}$ in the expression above with $(z_1, z_2)^{A_j}$, which gives
\[
p(z_1, z_2) = \frac{1}{2i}\left(-1-\varepsilon z_1 +\varepsilon z_2+ z_1z_2\right).
\]
By construction, $e^{-\lambda_0x}f(x) = p(\exp(ix\lv))$. 
The amoeba of $p$ for $\varepsilon= 1/4$ is shown in \Cref{fig:amoeba}. 
As we will see in \Cref{sec:AlmostPeriodic} below, the real rootedness of $p(\exp(ix\lv))$ guarantees that the amoeba  $\mathcal{A}(p)$ is disjoint from $\R_-\lv \cup \R_+\lv$. 

The Newton polytope of $p$ is the unit square $[0,1]^2$. The vertex maximized by $\balpha \mapsto \langle \balpha, \lv\rangle$ is $(1,1)$ and the normal cone of ${\rm Newt}(p)$ at the vertex $ (1,1) $ is $\R_{\geq 0}^2$. Similarly, the vertex $(0,0)$ achieves the maximum inner product with $-\lv$ and the normal cone at $(0,0)$ is $\R_{\leq 0}^2$. By \Cref{prop:recCone}, it follows that $\R_-^2\cup \R_+^2$ are disjoint from the amoeba $\mathcal{A}(p)$. Since $p(0)=\frac{1}{2i}\ne0$, we conclude by \Cref{lem:AmoebaLY} that $p$ is a Lee-Yang polynomial. 
\end{example}

\section{Almost Periodic Functions}\label{sec:AlmostPeriodic}

Exponential sums are intimately related with the notion of \emph{almost periodic functions}, developed by 
Harald Bohr\footnote{It is worth mentioning that Harald Bohr was the younger brother of Niels Bohr, one of the founding fathers of quantum theory and the 1922 physics Nobel laureate.}. 
He defined this notion for functions on the real line \cite[p.32]{Bohr2018almost} and proved the ``Fundamental Theorem of Almost Periodic Functions'' \cite[p.80]{Bohr2018almost}, which states that the space of almost periodic functions is the closure of  the set of exponential polynomials with purely imaginary exponents (real frequencies) in the space of bounded continuous functions from $ \R $ to $ \C $.
Here we need the following slight generalization.

\begin{Def}
Given an open strip $ I_{h}=\set{x+iy\ :\ x\in\R, |y|< h} $ in the complex plane, a holomorphic function $ f $ on $ I_h $ is said to be \textbf{almost periodic function on $ I_h $} if for any $ \varepsilon>0 $ there is a relatively dense set $ T_{\varepsilon}\subset\R $ such that $ |f(z+\tau)-f(z)|<\varepsilon $ for any $ \tau\in T_{\varepsilon}  $ and any $ z\in I_{h} $. By relatively dense, we mean that there exists some $ R=R(\varepsilon)>0 $ such that $ T_{\varepsilon}$ has nonempty intersection with every interval of length $R$ in the real line.
\end{Def}

According to \cite[Theorem 1, p. 266]{Levin_zeros}, if $ f(x)=\sum_{j=0}^{s}c_{j}e^{i\omega_j x} $ where $c_0, \hdots, c_s \in \C^*$ and $\omega_0, \hdots, \omega_s \in \R$, then for any $h>0$, $ f_{h}(x):=f(x+ih) $ is almost periodic in the upper half plane, and so $ f $ is almost periodic in the strip $ I_{h} $. With this, we may restate \cite[Lemma 1, p. 268]{Levin_zeros} for exponential polynomials instead of almost periodic functions.

\begin{lem}\cite[Lemma 1, p. 268]{Levin_zeros}\label{lem: lower bound}
	Suppose that $ f(x)=\sum_{j=0}^{s}c_{j}e^{i\omega_j x} $, where $c_0, \hdots, c_s \in \C^*$ and $\omega_0, \hdots, \omega_s \in \R$. Given any $ h>0 $ and $ \delta>0 $ there is a uniform lower bound 
	\[|f(x+iy)|\ge m(h,\delta)>0,\]
	for all $ x\in\R $ and $ y\in (-h,h) $ such that $ |x+iy-z|>\delta $ for every zero $ z\in\C $ with $ f(z)=0 $.
\end{lem}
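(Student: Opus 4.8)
The plan is to prove this by contradiction, via a compactness argument on real translates of $f$ combined with Hurwitz's theorem. Since $f$ is a \emph{finite} exponential sum, no input beyond Bolzano--Weierstrass is needed here; this is essentially Levin's argument, specialized to exponential polynomials.

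Suppose the bound fails for some $h>0$ and $\delta>0$. Negating it produces a sequence $w_k=x_k+iy_k$ with $|y_k|<h$, with $|w_k-z|>\delta$ for every zero $z$ of $f$, and with $|f(w_k)|\to 0$. I would then look at the real translates $f_k(z):=f(z+x_k)=\sum_{j=0}^{s}\bigl(c_j e^{i\omega_j x_k}\bigr)e^{i\omega_j z}$. The coefficient tuple $(c_j e^{i\omega_j x_k})_{j}$ lies in the compact torus $\prod_{j}\{\zeta\in\C:|\zeta|=|c_j|\}$ and $y_k\in[-h,h]$, so after passing to a subsequence I may assume $c_j e^{i\omega_j x_k}\to\tilde c_j$ with $|\tilde c_j|=|c_j|>0$ for each $j$, and $y_k\to y_\infty\in[-h,h]$. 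Set $g(z)=\sum_{j=0}^{s}\tilde c_j e^{i\omega_j z}$. Being a finite sum of entire functions with convergent coefficients, $f_k\to g$ uniformly on compact subsets of $\C$, and $g\not\equiv 0$ because the $\omega_j$ are distinct and all $\tilde c_j\neq 0$. Uniform convergence near $iy_\infty$ together with $iy_k\to iy_\infty$ gives $f(w_k)=f_k(iy_k)\to g(iy_\infty)$, and since $f(w_k)\to 0$ this forces $g(iy_\infty)=0$.

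To conclude I would invoke Hurwitz's theorem: as $g\not\equiv 0$ its zeros are isolated, so fix $\rho\in(0,\delta/2]$ with $g$ nonvanishing on $\overline{D}(iy_\infty,\rho)\setminus\{iy_\infty\}$; since $g$ has a zero inside $D(iy_\infty,\rho)$ and $f_k\to g$ uniformly on $\overline{D}(iy_\infty,\rho)$, for all large $k$ there is $\zeta_k\in D(iy_\infty,\rho)$ with $f_k(\zeta_k)=0$, i.e.\ $\zeta_k+x_k$ is a zero of $f$ (which need not lie in the strip, but the hypothesis concerns all complex zeros). Then for $k$ large enough that $|y_k-y_\infty|<\delta-\rho$,
\[
|w_k-(\zeta_k+x_k)|=|iy_k-\zeta_k|\le\rho+|y_k-y_\infty|<\delta,
\]
contradicting the choice of $w_k$. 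Hence the uniform lower bound $m(h,\delta)>0$ exists.

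The step I expect to be the crux is the non-vanishing of the limit $g$, which is exactly what licenses the Hurwitz step: here it is immediate because real translation preserves the moduli of the coefficients $c_j$, whereas in Levin's general almost-periodic setting this ``no collapse'' of a limit of translates is the substantive point and rests on Bohr's theory.
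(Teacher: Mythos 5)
Your argument is correct: the negation produces points $w_k=x_k+iy_k$ in the strip, $\delta$-far from the zero set, with $f(w_k)\to 0$; the real translates $f_k(z)=f(z+x_k)$ have coefficients of fixed moduli $|c_j|$, so a subsequence converges (locally uniformly) to a limit exponential sum $g$ with nonzero coefficients, hence $g\not\equiv 0$ (using, as you note, the implicit distinctness of the $\omega_j$); then $g(iy_\infty)=0$, Hurwitz/Rouch\'e produces zeros $\zeta_k$ of $f_k$ near $iy_\infty$, and $\zeta_k+x_k$ is a zero of $f$ within distance $\delta$ of $w_k$, a contradiction. The paper does not prove this lemma at all: it quotes it from Levin's book, where it is established for general almost periodic functions on a strip, with the ``no collapse'' of limits of translates supplied by Bohr's theory (normality of the family of translates of an almost periodic function). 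Your proof is a genuinely more elementary, self-contained specialization: because $f$ is a finite exponential sum, the family of real translates is parametrized by a compact torus of coefficient phases, so Bolzano--Weierstrass replaces the almost-periodicity machinery, at the cost of not covering the general almost periodic setting that Levin's lemma addresses (which the paper does not need, since it only applies the lemma to exponential polynomials). One cosmetic caveat: the lemma as stated does not say the $\omega_j$ are distinct; you should either note that one may assume this after collecting terms (discarding the degenerate case $f\equiv 0$, where the hypothesis is vacuous) or state the distinctness assumption explicitly, since your nonvanishing of $g$ relies on it.
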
     
\begin{cor}\label{cor:Bohr}
If $\lv\in \R^n$ has rationally independent entries and $p\in \C[z_1, \hdots, z_n]$ has the property 
that $f(x)=p(\exp(ix\lv))$ is real rooted, then for every $t\in \R^*$ and $\btheta\in (\R/2\pi\Z)^n$, $p(\exp(t\lv+i \btheta))\ne0$. That is, the set $\{t\lv : t\in \R^*\}$ is disjoint from the amoeba $\A(p)$ of $p$. 
\end{cor}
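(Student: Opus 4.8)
The plan is to argue by contradiction: suppose that $p(\exp(t_0\lv + i\btheta_0)) = 0$ for some $t_0 \in \R^*$ and $\btheta_0 \in (\R/2\pi\Z)^n$, so that $t_0\lv$ lies in the amoeba $\A(p)$. I want to leverage the almost periodicity of $f_{t_0}(x) := f(x + it_0 \cdot(\text{something}))$ — more precisely, I will look at the translated exponential polynomial obtained by shifting the imaginary part. The key point is that since $\lv$ has rationally independent entries, the orbit $\{x\lv \bmod 2\pi\Z^n : x \in \R\}$ is dense in the torus $(\R/2\pi\Z)^n$ by the Weyl equidistribution / Kronecker theorem. So there is a sequence $x_k \to \infty$ with $x_k \lv \to \btheta_0 \pmod{2\pi\Z^n}$; combined with $\exp(\cdot)$ being continuous, $p(\exp(i x_k \lv)) = f(x_k) \to p(\exp(i\btheta_0))$ along a suitable subsequence. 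But I need the imaginary shift too: consider $g(x) := p(\exp(-t_0\lv + i x \lv))$ viewed as a function of the real variable $x$; wait — I should instead directly consider $F(x) := f(x - i t_0) = p(\exp(i x\lv + t_0 \lv))$. Hmm, let me set it up cleanly below.

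First I would write $f(x) = p(\exp(ix\lv)) = \sum_{\balpha \in \supp(p)} c_\balpha e^{i x \langle \lv, \balpha\rangle}$, an exponential polynomial with real frequencies $\omega_\balpha = \langle \lv, \balpha\rangle$. For the purported amoeba point $t_0 \lv$, pick $\btheta_0$ with $p(\exp(t_0 \lv + i\btheta_0)) = 0$. Consider the shifted function $h(x) := f(x - i t_0) = \sum_\balpha c_\balpha e^{t_0 \langle \lv,\balpha\rangle} e^{i x\langle\lv,\balpha\rangle} = p(\exp(t_0\lv + i x\lv))$; this is again an exponential polynomial with the same real frequencies $\omega_\balpha$ but modified coefficients $c'_\balpha = c_\balpha e^{t_0\langle\lv,\balpha\rangle}$, all still nonzero. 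By the Kronecker–Weyl theorem (rational independence of the entries of $\lv$), the set $\{x\lv \bmod 2\pi : x\in\R\}$ is dense in the torus, so there is a sequence $x_k \to +\infty$ with $x_k\lv \to \btheta_0 \pmod{2\pi\Z^n}$, whence $h(x_k) = p(\exp(t_0\lv + i x_k\lv)) \to p(\exp(t_0\lv + i\btheta_0)) = 0$. So $h$, hence $f$, has points $x_k - it_0$ with $x_k\to\infty$ where $f$ is arbitrarily close to zero, all at fixed imaginary part $-t_0$ (which lies in a strip $I_h$ for $h > |t_0|$).

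Now I apply Lemma \ref{lem: lower bound} to $f$ with $h := |t_0| + 1$ and any $\delta > 0$: it gives $m(h,\delta) > 0$ so that $|f(x+iy)| \ge m(h,\delta)$ whenever $|y| < h$ and $x + iy$ is at distance $> \delta$ from every zero of $f$. Since $f$ is real-rooted, all its zeros lie on the real axis, so any point with imaginary part $-t_0 \ne 0$ is at distance $\ge |t_0|$ from every zero; taking $\delta = |t_0|/2$ we get $|f(x - it_0)| \ge m(h, |t_0|/2) > 0$ for all real $x$. This contradicts $h(x_k) = f(x_k - it_0) \to 0$. Therefore no such $t_0, \btheta_0$ exist, proving $p(\exp(t\lv + i\btheta)) \ne 0$ for all $t \in \R^*$; equivalently, $\{t\lv : t\in\R^*\}$ is disjoint from $\A(p)$.

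The main obstacle I anticipate is making sure the quantitative lower bound in Lemma \ref{lem: lower bound} is applied legitimately — specifically that $f$ genuinely extends holomorphically and is almost periodic on the needed horizontal strip (which the excerpt grants via \cite{Levin_zeros}, since $f$ has purely imaginary exponents after the $e^{\lambda_0 x}$ reduction, but here we are told only that $f(x) = p(\exp(ix\lv))$, so $f$ is automatically of this form) and that the density/Kronecker step produces $x_k$ going to infinity (needed only so the $x_k$ are real and the argument stays inside the strip — boundedness of $x_k$ would be fine too). A secondary subtlety is whether $\lv$ having rationally independent entries is exactly what Kronecker's theorem needs for torus density — it is, since $1$ together with a $\Q$-independent set is not required; rational independence of the $\ell_j$ alone suffices for density of $\{(x\ell_1,\dots,x\ell_n) \bmod 2\pi\}$. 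These are all routine once spelled out.
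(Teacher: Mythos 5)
Your proof is correct and follows essentially the same route as the paper: shift by an imaginary amount so the point $t_0\lv$ is reached, apply the lower bound of \Cref{lem: lower bound} (using real-rootedness to keep the shifted points at distance $\ge |t_0|$ from all zeros), and then use Kronecker density of $\{x\lv \bmod 2\pi\}$ in the torus together with continuity of $p$. The only differences are cosmetic: you argue by contradiction and apply the lemma to $f$ at points with imaginary part $-t_0$, while the paper argues directly with the shifted exponential polynomial $f_t$ evaluated on the real line.
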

\begin{proof}
Let $ t\in \R^*$ and let $ f_{t}(x):=f(x+it)=p(\exp(-t\lv+ix\lv)) $. The zero set $ f_{t} $ lies on the line $ \im(z)=-t $ since $ f $ was real rooted. Fix arbitrary $h>0 $ and $ 0<\delta<|t| $, so that \cref{lem: lower bound} gives
\begin{equation*}
	|p(\exp(t\lv+ix\lv))|=|f_{t}(x)|\ge m(h,\delta)>0,
\end{equation*}
uniformly over $ x\in\R $, since $ |x-z|\ge|t|>\delta $ for every zero $ f_{t}(z)=0 $. Since the entries of $ \lv $ are $ \Q $-linearly independent, the set $ \set{\exp(t\lv+ix\lv) : x\in\R} $ is dense in $ \set{\exp(t\lv+i\btheta): \btheta\in(\R/2\pi\Z)^{n}} $ in the Euclidean topology on $\C^n$. By the continuity of $ p $,
\[|p(\exp(t\lv+i\btheta))|\ge m(h,\delta)>0,\]
for any $ \btheta\in(\R/2\pi\Z)^{n} $. 
\end{proof}

\section{Proof of the main theorem}\label{sec:proof}
\begin{proof}[Proof of \Cref{thm:main}]Let $f(x) = \sum_{j=0}^s c_j e^{\lambda_j x}$ where $c_0, \hdots, c_s \in \C^{*}$ and $\lambda_0, \hdots, \lambda_s\in \C$,  where $ \im(\lambda_0)\le\im(\lambda_{j}) $ for all $ j\geq 1$ and suppose that $f(x)$ is real rooted. By \Cref{lem: complex frequencies},
	\[
f(x) =e^{\lambda_{0}x}\left(c_{0}+\sum_{j=1}^{s}c_j e^{i\omega_j x}\right)
\]
with real positive frequencies $ \omega_j=\im(\lambda_{j}-\lambda_{0})>0 $ for $ j\ge1 $. Let $n=\dim_{\Q}(\omega_1, \hdots, \omega_s)$. By \Cref{lem:Exp2Poly}, there is a polynomial $p\in \C[z_1, \hdots, z_n]$ with constant coefficient $c_0\neq 0$ and vector $\lv\in \R_+^n$ with rationally independent entries for which
\[f(x) = e^{\lambda_{0}x}p(\exp(ix\lv)).\]
Since $f$ is real rooted, then by \Cref{cor:Bohr}, the set $\{t\lv : t\in \R^*\}$ is disjoint from the amoeba $\A(p)$ of $p$. 
Together with \Cref{lem:MonomialChange} this guarantees the existence of a Lee-Yang polynomial 
$q\in \C[z_1, \hdots, z_n]$ and vector $\tilde{\lv}\in \R_+^n$ for which 
\[q(\exp(ix\tilde{\lv})) = p(\exp(ix\lv)) = e^{-\lambda_{0}x}f(x). \qedhere
\]
\end{proof}
%
%
%
%
%
%

\bibliographystyle{plain}
\bibliography{ExpPoly}

\end{document}